\documentclass[11pt,reqno]{amsart}
\usepackage{amsmath,amsthm,amssymb}
\usepackage{xcolor}

\newtheorem{theorem}{Theorem}

\newtheorem{proposition}[theorem]{Proposition}

\theoremstyle{definition}

\newtheorem{definition}[theorem]{Definition}

\newtheorem{remark}[theorem]{Remark}

\newtheorem{example}[theorem]{Example}

\begin{document}

\title{On the Chern-Ricci form of a twisted almost K\"{a}hler structure}
\author{David N. Pham and Fei Ye}

\address{Department of Mathematics $\&$ Computer Science, QCC CUNY, Bayside, NY 11364}
\curraddr{}
\email{dnpham@qcc.cuny.edu}
\email{fye@qcc.cuny.edu}

\subjclass[2020]{53C55, 53C15, 53B35, 53C05}

\keywords{almost K\"{a}hler manifolds; Chern-Ricci form; deformation-theory, anti-canonical bundle; Chern connection; connection 1-forms}

\dedicatory{}

\begin{abstract}
Let $(M,g,J,\omega)$ be an almost K\"{a}hler manifold.  For any smooth function $f$ on $M$, one can associate an automorphism $\psi\in \mbox{Aut}(TM)$ for which the K\"{a}hler form is invariant.  Using $\psi$, one can ``twist" the metric $g$ and almost complex structure $J$ to obtain a new almost K\"{a}hler structure $(g^\psi,J^\psi,\omega)$ on $M$.  Let $\widetilde{D}$ denote the Chern connection of $(g^\psi,J^\psi,\omega)$ and let $K^{-1}$ denote the anti-canonical bundle of $(TM,J^\psi)$.  In the current paper, we give an explicit formula for the local connection 1-form $\alpha$ associated to the pair $(K^{-1},\widetilde{D})$.  The Chern-Ricci form of $(g^\psi,J^\psi,\omega)$ is then $\rho_{\widetilde{D}}=-d\alpha$.  We note that under certain conditions the aforementioned formula assumes a simpler form  when applied to the calculation of $\alpha$.  We illustrate this with some examples.
\end{abstract}

\date{}

\maketitle

\section{Introduction}
\noindent Let $(M,g,J,\omega)$ be an almost K\"{a}hler manifold where 
\begin{itemize}
    \item[$\bullet$] $g$ is the metric,
    \item[$\bullet$] $J$ is the almost complex structure, and
    \item[$\bullet$] $\omega:=g(J\cdot,\cdot)$ is the K\"{a}hler form.
\end{itemize}
In other words, $(g,J,\omega)$ is an almost Hermitian structure where $\omega$ is a symplectic form.  One can show (see e.g. \cite{ApostolovDraghici2005}) that the symplectic condition implies that 
\begin{equation}
    \label{eqAKNJ}
    g((\nabla^g_ZJ)X,Y)=-\frac{1}{2}g(N_J(X,Y),JZ),
\end{equation}
where $\nabla^g$ denotes the Levi-Civita conneciton of $g$ and $N_J$ is the Nijenhuis tensor which we define with the following convention
$$
N_J(X,Y):=J[JX,Y]+J[X,JY]+[X,Y]-[JX,JY].
$$
Using equation (\ref{eqAKNJ}), one can further show that 
\begin{equation}
\label{eqJNabla}
\nabla^g_{JX}J=-J\nabla^g_{X}J.
\end{equation}
The Chern connection associated to $(g,J,\omega)$ is the unique connection $D$ on $TM$ for which $Dg=0$, $DJ=0$, and whose torsion $T^D$ has vanishing $(1,1)$ part with respect to $J$. Explicitly, $D$ is given by the following formula:
\begin{equation}
    \label{eqChernConnection}
    D_XY=\nabla^g_XY+\frac{1}{2}(\nabla^g_XJ)(JY).
\end{equation}
From this, it follows that the torsion is given by
$$
T^D = -\frac{1}{4}N_J.
$$
Let $T_JM$ be the complex vector bundle whose underlying real vector bundle is $TM$ and where scalar multiplication by $\sqrt{-1}$ is given by 
$$
\sqrt{-1}X:=JX.
$$
As $DJ=0$, we regard $D$ as a connection on the complex vector bundle $T_JM$. Note that if $\dim M=2n$, then $T_JM$ is a complex vector bundle of rank $n$.  Let 
$$
\widehat{g}:=g-\sqrt{-1}\omega.
$$
$\widehat{g}$ then defines a natural hermitian metric on $T_JM$, that is, 
$$
\widehat{g}(JX,Y)=\sqrt{-1}\widehat{g}(X,Y),\hspace*{0.2in}\widehat{g}(Y,X)=\overline{\widehat{g}(X,Y)}.
$$
Since $g$ and $J$ are both parallel with respect to $D$, it follows that $D\widehat{g}=0$.  The curvature endomorphism of $D$ is defined by
$$
R^D(X,Y):=[D_X,D_Y]-D_{[X,Y]}.
$$
The associated Chern-Ricci form is then
\begin{equation}
    \label{eqChernRicciForm}
    \rho_D(X,Y):=\mathrm{Tr}(R^D(X,Y)\circ J)=\sqrt{-1}\mathrm{Tr}(R^D(X,Y)),
\end{equation}
where the second equality follows from the fact that $J=\sqrt{-1}\mathrm{id}_{T_JM}$ under the identification $\sqrt{-1}X:=JX$.  In this paper, ``$\mathrm{Tr}$" will always mean the trace of a complex endomorphism. As a side remark, we recall that when the almost complex structure is integrable (i.e. the manifold is K\"{a}hler), the Chern-Ricci form coincides with the definition of the Ricci form. 

Let 
$$
e_1,\dots, e_n,~e_{n+1},\dots, e_{2n}
$$
be a $g$-orthonormal frame where $e_{n+i}:=Je_i$ for $i\le n$.  Then $e_1,\dots, e_n$ is a unitary frame on $T_JM$ and we may write 
$$
D_Xe_j=\sum_{i=1}^nA_{ij}(X)e_i
$$
where $A_{ij}$ are complex valued 1-forms whose domain is the same as the frame $\{e_i\}_{i=1}^n$. 
Taken together, $A:=(A_{ij})$ is the local connection 1-form for $D$ with respect to $\{e_i\}_{i=1}^n$.  

The matrix representation of $R^D$ with respect to $\{e_i\}_{i=1}^n$ is then the $n\times n$ matrix of 2-forms given by
$$
[R^D] = dA+A\wedge A
$$
where 
$$
(A\wedge A)(X,Y):=[A(X),A(Y)].
$$
It follows from this and the cyclic properties of the trace that 
\begin{equation}
\label{eqChernRicciTrace}
\rho_D=\sqrt{-1}~d\mathrm{Tr}(A)=\sqrt{-1}\sum_{i=1}^ndA_{ii}.
\end{equation}
Let $K^{-1}:=\wedge^n_{\mathbb{C}} T_JM$ be the \textit{anti-canonical bundle}.  Then $\xi:=e_1\wedge \cdots \wedge e_n$ is a local section of $K^{-1}$. Applying $D$ to $K^{-1}$ one has
$$
D_X\xi=\mathrm{Tr}(A)(X)\cdot \xi.
$$
Hence, $\mathrm{Tr}(A)$ is the local connection 1-form of the pair $(K^{-1},D)$. From this, it follows that the curvature of $(K^{-1},D)$ is 
$$
d\mathrm{Tr}(A)=-\sqrt{-1}\rho_D.
$$
Since $\widehat{g}(e_i,e_j)=\delta_{ij}$ and $D\widehat{g}=0$, it follows that
$$
A_{ji}=-\overline{A_{ij}}.
$$
In particular, the diagonal elements $A_{ii}$ are imaginary valued which, in turn, imply that the Chern-Ricci form $\rho_D$ is real valued.  As a consequence of this, equation (\ref{eqChernRicciTrace}) can be rewritten as
\begin{equation}
    \rho_D=-d\operatorname{Im}\mathrm{Tr}(A).
\end{equation}
\noindent For convenience, we make the following definition:
\begin{definition}
    Let $(M,g,J,\omega)$ be an almost K\"{a}hler manifold.  An automorphism $\psi\in \mathrm{Aut}(TM)$ is called a \textit{twist map} if it satisfies the following conditions:
    \begin{itemize}
        \item[(i)] $\psi$ is $g$-symmetric, that is, $\psi$ is self-adjoint with respect to $g$. 
        \item[(ii)] $J\psi =\psi^{-1}J$.
    \end{itemize}
\end{definition}
On an almost K\"{a}hler manifold $M$, twist maps exist in abundance.  Any smooth function $f\in C^\infty(M)$ gives rise to a twist map by the following construction\footnote{The authors wish to thank Professor Mehdi Lejmi for bringing this construction to their attention and for helpful discussions that motivated this work.} (see Theorem 4.7 of \cite{Blair2010}).  For $f\in C^\infty(M)$, let $X_f\in \mathfrak{X}(M)$ be the associated Hamiltonian vector field, that is,
$$
\omega(X_f,\cdot)=-df.
$$
Let $\widetilde{\psi}:=\mathcal{L}_{X_f}J$ where $\mathcal{L}_{X_f}$ is the Lie derivative with respect to $X_f$.  It follows immediately from this that 
$$
J\widetilde{\psi}=-\widetilde{\psi}J.
$$
One can show that $\widetilde{\psi}$ is $g$-symmetric if and only if $\beta_f(Y,Z)=\beta_f(Z,Y)$ where
$$
\beta_f(Y,Z):=g((\nabla^g_{Y}J)X_f,Z)-g(J(\nabla^g_{JY}J)X_f,Z)
$$
However, by equation (\ref{eqJNabla}), it follows that $\beta_f\equiv 0$.  Hence, $\widetilde{\psi}$ is $g$-symmetric.  The $g$-symmetry of $\widetilde{\psi}$ and its anticommutativity with $J$ implies that 
$$
\psi:=\mathrm{exp}(\widetilde{\psi})
$$ 
is a twist map. Using a twist map $\psi$, one obtains a new almost K\"{a}hler structure $(g^\psi,J^\psi,\omega)$ by defining
\begin{equation}
\label{eqPsiTwist}
g^\psi:=g(\psi^{-1}\cdot,\psi^{-1}\cdot),\hspace*{0.1in}J^\psi:=\psi\circ J\circ \psi^{-1}.
\end{equation}
Observe that the properties of $\psi$ imply that 
$$
\omega^\psi:=\omega(\psi^{-1}\cdot,\psi^{-1}\cdot)=\omega.
$$
Hence, a twist map $\psi$ deforms the metric and almost complex structure while leaving the K\"{a}hler form unchanged.  We will call the new almost K\"{a}hler structure $(g^\psi,J^\psi,\omega)$ a $\psi$-twist of $(g,J,\omega)$.

For convenience, let $(h,I,\omega):=(g^\psi,J^\psi,\omega)$.  Also, let $\tilde{D}$ be the Chern connection associated to $(M,h,I,\omega)$ and let $e_1,\dots, e_n$ be a local $\hat{g}$-unitary frame of $T_JM$. Define $f_1,\dots, f_n$ to be the $\hat{h}$-unitary frame on $T_IM$ given by $f_i:=\psi e_i$ and let $\widetilde{A}$ be the local connection 1-form of $\tilde{D}$ with respect to $f_1,\dots, f_n$.  In the current paper, we obtain the following formula for $\operatorname{Im}\mathrm{Tr}(\widetilde{A})$:
\begin{theorem}
    \label{thmMain}
    Let $(M,g,J,\omega)$ be an almost K\"{a}hler manifold and let $\psi$ be a twist map.  Let $h:=g^\psi$ and $I:=J^\psi$ and let $\widetilde{D}$ be the Chern connection of $(h,I,\omega)$.  Let $e_1,\dots, e_n$ be a $\widehat{g}$-unitary frame of $T_JM$ and let
    \begin{align}
        \nonumber
        \eta_P(X)&:=-\frac{1}{2}\sum_{i=1}^{2n}g(J\psi^{-1}[X,\psi e_i],e_i)\\
        \label{eqEtaP}
        &-\frac{1}{2}\sum_{i=1}^{2n}g(J\psi\nabla^g_{\psi e_i}(\psi^{-2}X),e_i),
    \end{align}
    where $e_{n+i}:=Je_i$ for $i\le n$.  Let $\widetilde{A}$ be the local connection 1-form of $\widetilde{D}$ associated to the $\widehat{h}$-unitary frame $f_1,\dots, f_n$ on $T_IM$ where $f_i:=\psi e_i$.  Then $\operatorname{Im}\mathrm{Tr}(\widetilde{A})=\eta_P$ and $\rho_{\widetilde{D}}=-d\eta_P$. 
    
    In addition, if 
    \begin{itemize}
        \item[(i)] $(\psi e_i)g(e_j,J\psi e_i)=0$ for all $i,j=1,\dots, 2n$, and
        \item[(ii)] $[\psi e_i,\psi J e_i]=0$ for $i\le n$,
    \end{itemize}
    then
    $$
        \eta_P(e_j)=-\frac{1}{2}\sum_{i=1}^{2n}g(J\psi^{-1}[e_j,\psi e_i],e_i)
    $$
    for $j=1,\dots, 2n$.
\end{theorem}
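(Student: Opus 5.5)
The plan is to compute $\operatorname{Im}\mathrm{Tr}(\widetilde{A})$ directly from the definition and then rewrite everything in terms of $g$, $J$ and $\psi$. Since $f_i=\psi e_i$ and $I f_i=\psi J\psi^{-1}\psi e_i=\psi Je_i=f_{n+i}$ (setting $f_{n+i}:=\psi e_{n+i}$), the frame $f_1,\dots,f_{2n}$ is $h$-orthonormal. By definition,
$$\widetilde{A}_{ii}(X)=\widehat{h}(\widetilde{D}_Xf_i,f_i)=h(\widetilde{D}_Xf_i,f_i)-\sqrt{-1}\,\omega(\widetilde{D}_Xf_i,f_i).$$
The real part vanishes because $\widetilde{D}h=0$. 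Moreover $\omega(\widetilde{D}_X If_i,If_i)=\omega(\widetilde{D}_Xf_i,f_i)$, because $\widetilde{D}I=0$ and $\omega$ is $I$-invariant. Hence
$$\operatorname{Im}\mathrm{Tr}(\widetilde{A})(X)=-\frac{1}{2}\sum_{i=1}^{2n}\omega(\widetilde{D}_Xf_i,f_i)=-\frac12\sum_{i=1}^{2n}h(I\widetilde{D}_Xf_i,f_i).$$
Once this equals $\eta_P$, the identity $\rho_{\widetilde{D}}=-d\eta_P$ follows immediately from the discussion preceding the definition of twist maps, applied to $(h,I,\omega)$.

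Next I substitute formula (\ref{eqChernConnection}) for $(h,I)$, namely $\widetilde{D}_XY=\nabla^h_XY+\frac12(\nabla^h_XI)(IY)$. The correction term contributes $\frac12\,\mathrm{tr}_{\mathbb{R}}\big(I(\nabla^h_XI)I\big)$. Because $\nabla^h_XI$ anticommutes with $I$, we have $I(\nabla^h_XI)I=\nabla^h_XI$, and this endomorphism has zero real trace, since anticommuting with the invertible $I$ forces the trace to vanish. So only $-\frac12\sum_i h(I\nabla^h_Xf_i,f_i)=\frac12\sum_i h(\nabla^h_Xf_i,If_i)$ survives.

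I then expand $2h(\nabla^h_X f_i,If_i)$ by the Koszul formula for $h$:
$$f_i\,h(X,If_i)-(If_i)\,h(X,f_i)+h([X,f_i],If_i)-h([X,If_i],f_i)-h([f_i,If_i],X),$$
where the term $X\,h(f_i,If_i)$ is zero. Summing over the $2n$ frame vectors, the terms $h([f_i,If_i],X)$ cancel between the $i$ and $n+i$ entries, and the two middle bracket terms combine. Converting via $h(U,V)=g(\psi^{-1}U,\psi^{-1}V)$ and $\psi^{-1}I=J\psi^{-1}$, the bracket terms become exactly $-\frac12\sum_i g(J\psi^{-1}[X,\psi e_i],e_i)$. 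The derivative terms equal $\sum_i f_i\,h(X,If_i)$ after reindexing. I rewrite $h(X,If_i)=g(\psi^{-2}X,J\psi e_i)$ using the $g$-symmetry of $\psi$ and $J\psi=\psi^{-1}J$, and then differentiate with $\nabla^g$. This produces $g(\nabla^g_{\psi e_i}(\psi^{-2}X),J\psi e_i)$, which gives the second term of (\ref{eqEtaP}) after moving $\psi$ and $J$ across (all signs to be tracked), plus the remainder $g(\psi^{-2}X,\nabla^g_{\psi e_i}(J\psi e_i))$.

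The main obstacle is showing that this remainder sums to zero, or is absorbed. My first attempt will pair $i$ with $n+i$ and use the symmetry of $\psi$ together with $\nabla^g\omega$-type identities (\ref{eqAKNJ}) and (\ref{eqJNabla}). If that fails, I will avoid $\nabla^g$ in that piece altogether and keep it as a directional derivative.

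For the second statement, I take $X=e_j$ in the second term. I write $\psi^{-2}e_j=\sum_k g(\psi^{-2}e_j,e_k)e_k$ and expand the derivative $(\psi e_i)\,g(e_j,J\psi e_i)$ directly. Hypothesis (i) kills these directional-derivative terms. The remaining connection terms reduce, via Koszul again, to brackets $[\psi e_i,\psi Je_i]$, which vanish by (ii). This leaves only the first sum.
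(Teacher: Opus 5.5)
There is a genuine error in your Koszul step, and it is what turns the remainder into an unresolvable ``main obstacle''. The terms $h([f_i,If_i],X)$ do not cancel between the entries $i$ and $n+i$. Since $f_{n+i}=If_i$ and $If_{n+i}=-f_i$, you have $[f_{n+i},If_{n+i}]=[If_i,-f_i]=[f_i,If_i]$, so these terms double. They contribute
$$
-\frac{1}{2}\sum_{i=1}^n h([f_i,If_i],X)=-\frac{1}{2}\sum_{i=1}^n g([\psi e_i,\psi Je_i],\psi^{-2}X)
$$
to $\operatorname{Im}\mathrm{Tr}(\widetilde A)(X)$.

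Conversely, the remainder cannot be shown to vanish, because in general it does not. First correct the rewriting: $h(X,If_i)=g(\psi^{-1}X,Je_i)=g(\psi^{-2}X,\psi Je_i)$, not $g(\psi^{-2}X,J\psi e_i)$. Note that $J\psi e_i=\psi^{-1}Je_i$, so with your vector the second sum of $\eta_P$ would not come out. With the correct vector, pair $k=i$ with $k=n+i$ and use torsion-freeness: $\sum_{k=1}^{2n}\nabla^g_{\psi e_k}(\psi Je_k)=\sum_{i=1}^n[\psi e_i,\psi Je_i]$. So the remainder equals $\frac{1}{2}\sum_{i=1}^n g(\psi^{-2}X,[\psi e_i,\psi Je_i])$. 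This is generally nonzero. For $\psi=\mathrm{id}$ on flat K\"ahler $\mathbb{R}^2$ with the unitary frame $e_1=\cos\theta\,\partial_x+\sin\theta\,\partial_y$, it equals $-\frac{1}{2}d\theta(X)$. No almost K\"ahler identity such as (\ref{eqAKNJ}) or (\ref{eqJNabla}) can remove it. Your own plan for the second statement shows this: there you reduce the same remainder to the brackets $[\psi e_i,\psi Je_i]$ and need hypothesis (ii), which would be superfluous if it vanished identically.

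The repair is immediate: the bracket term you discarded is exactly the negative of the remainder, so the two cancel. What is left is $\operatorname{Im}\mathrm{Tr}(\widetilde A)(X)=-\frac{1}{2}\sum_k g(J\psi^{-1}[X,\psi e_k],e_k)+\frac{1}{2}\sum_k g(\nabla^g_{\psi e_k}(\psi^{-2}X),\psi Je_k)$. The last sum is the second sum in (\ref{eqEtaP}), since $g(Y,\psi Je_k)=-g(J\psi Y,e_k)$.

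Once fixed, your argument is a genuinely different and shorter route than the paper's. The paper removes the $\nabla^h I$ terms by adding (\ref{eqPhaseA}) to its $f_i\to If_i$ analogue. It then needs the explicit difference tensor $E=\nabla^h-\nabla^g$ (Proposition \ref{propE}, imported from another paper) and a long simplification. Your real-trace argument and the Koszul formula avoid $E$ entirely. Your route also makes the second statement transparent. At $X=e_j$, hypothesis (i) kills the derivative term $\frac{1}{2}\sum_k(\psi e_k)\,g(e_j,J\psi e_k)$. What remains of the second sum is minus the bracket sum, which (ii) kills; this matches the paper's argument for that part.
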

\begin{remark}
We note that P. Gauduchon derived an alternate formula (see equation (9.5.9) of \cite{GauduchonCalabi}) for $\operatorname{Im}\mathrm{Tr}(\widetilde{A})$.  As far as we can tell, the two formulas do not seem to be derivable from one another by any simple algebraic transformation.  We also note that Gauduchon's manuscript \cite{GauduchonCalabi}, which contains his formula, was never formally published and thus remains largely inaccessible to the broader mathematical community. 
\end{remark}
\noindent As a side remark, we note that with $I:=J^\psi$, the twist map $\psi: T_JM\stackrel{\sim}{\rightarrow} T_IM$ is an isomorphism of complex vector bundles which, in turn, implies that
$$
2\pi c_1(M)=[\rho_D]=[\rho_{\tilde{D}}],
$$
where $c_1(M)$ is the first Chern class of $T_JM$.  Hence, the Chern-Ricci form $\rho_{\tilde{D}}$ associated to the twisted almost K\"{a}hler structure $(g^\psi,J^\psi,\omega)$ remains in the same cohomology class as the Chern-Ricci form $\rho_D$ of the untwisted almost K\"{a}hler structure $(g,J,\omega)$. 

The Chern-Ricci form is (of course) the central object in the Chern-Ricci flow \cite{Gill2011,TosattiWeinkove2015,TosattiWeinkove2013,ShermanWeinkove2013} and one hopes to use the latter as a tool to study complex manifolds in a manner which is analogous to how Hamilton and Perelman used the Ricci flow to study 3-manifolds  \cite{Hamilton1982,Perelman2002,Perelman2003a,Perelman2003b}.  More recently, Zheng extended the idea of Chern-Ricci flow to the almost complex case \cite{Zheng2017} (which is the domain of interest in the current paper).  As a practical matter, it is certainly useful and beneficial to derive precise formulas for the Chern-Ricci form under various deformations given the importance and potential benefit of the Chern-Ricci flow.  The formula in Theorem \ref{thmMain} provides a direct method for computing the Chern-Ricci form in the present framework without explicitly determining the Chern connection, which can be computationally involved.  The rest of the paper is organized as follows.  In Section \ref{SecMainTheorem}, we give a proof of Theorem \ref{thmMain}.  In Section \ref{SecExamples}, we present some examples where we calculate $\operatorname{Im}\mathrm{Tr}(\widetilde{A})$ using the formula $\eta_P$ in Theorem \ref{thmMain}. The examples considered satisfy conditions (i) and (ii) of Theorem \ref{thmMain} which further simplifies our calculation of $\eta_P$.  The paper concludes in Section \ref{secConclusion} with some closing remarks and directions for future work.

\section{Proof of Theorem \ref{thmMain}}
\label{SecMainTheorem}
\noindent Let $e_1,\dots, e_n$ be a fixed $\widehat{g}$-unitary frame on $T_JM$.  Then it follows that 
$$
e_1,\dots, e_n,~e_{n+1},\dots,e_{2n}
$$
is a $g$-orthonormal frame on $TM$ where $e_{n+i}:=Je_i$ for $i\le n$.  Let $f_i:=\psi e_i$ for $i\le n$ and let $f_{n+i}:=If_i=\psi e_{n+i}$ for $i\le n$.  Then
$$
f_1,\dots, f_n,~f_{n+1},\dots, f_{2n}
$$
is an $h$-orthonormal frame on $TM$. In particular, $f_1,\dots, f_n$ is an $\widehat{h}$-unitary frame on $T_IM$.  Let $\widetilde{A}$ be the connection 1-form of $\widetilde{D}$ associated to $f_1,\dots, f_n$.  From this, we have
\begin{align*}
    \mathrm{Tr}(\widetilde{A})(X)&=\sum_{i=1}^n\widehat{h}(\widetilde{D}_Xf_i,f_i)\\
    &=-\sqrt{-1}\sum_{i=1}^n\omega(\widetilde{D}_Xf_i,f_i),
\end{align*}
where we use the fact that $\mathrm{Tr}(\widetilde{A})$ is purely imaginary since $f_1,\dots, f_n$ is an $\widehat{h}$-unitary frame.  From this, we have
\begin{equation}
    \label{eqTraceAtilde1}
    -\operatorname{Im}\mathrm{Tr}(\widetilde{A})(X)=\sum_{i=1}^n\omega(\widetilde{D}_Xf_i,f_i).
\end{equation}
\noindent Define
\begin{equation}
    \label{eqDefE}
    E(X,Y):=\nabla^h_XY-\nabla^g_XY,
\end{equation}
where we recall that $\nabla^g$ and $\nabla^h$ are the Levi-Civita connections of $g$ and $h$ respectively.  Using the almost K\"{a}hler identity (\ref{eqJNabla}), we can rewrite the Chern connection of $(h,I,\omega)$ as
\begin{align}
    \label{eqDtilde1}
    \widetilde{D}_XY &= \nabla^h_XY+\frac{1}{2}(\nabla^h_{IX}I)Y.
\end{align}
Using (\ref{eqDefE}), the second term in (\ref{eqDtilde1}) expands as 
\begin{equation}
    \label{eqNablaHIXI}
    \frac{1}{2}(\nabla^h_{IX}I)Y= \frac{1}{2}(\nabla^g_{IX}I)Y+ \frac{1}{2}E(IX,IY)- \frac{1}{2}IE(IX,Y).
\end{equation}
Rewriting (\ref{eqTraceAtilde1}) with the help of (\ref{eqDtilde1}) and (\ref{eqNablaHIXI}) gives
\begin{align}
    \nonumber
    -\operatorname{Im}\mathrm{Tr}(\widetilde{A})(X)&=\sum_{i=1}^n\omega(\nabla^h_Xf_i,f_i)+\frac{1}{2}\sum_{i=1}^n\omega((\nabla^h_{IX}I)f_i,f_i)\\
    \nonumber
    &=\sum_{i=1}^n\omega(\nabla^g_{X}f_i,f_i)+\sum_{i=1}^n\omega(E(X,f_i),f_i)\\
    \nonumber
    &+\frac{1}{2}\sum_{i=1}^n\omega((\nabla^g_{IX}I)f_i,f_i)+\frac{1}{2}\sum_{i=1}^n\omega(E(IX,If_i),f_i)\\
    \label{eqPhaseA}
    &-\frac{1}{2}\sum_{i=1}^n\omega(IE(IX,f_i),f_i).
\end{align}
Note that $If_1,\dots, If_n$ is also an $\hat{h}$-unitary frame on $T_IM$.  Moreover, the local connection 1-form of $\widetilde{D}$ associated to the aforementioned frame is also $\widetilde{A}$.  Indeed, one has
\begin{align*}
    \widetilde{D}_X(If_j)&=I\widetilde{D}_Xf_j=I\sum_{i=1}^n\widetilde{A}_{ij}(X)f_i=\sum_{i=1}^n\widetilde{A}_{ij}(X)(If_i).
\end{align*}
Hence, (\ref{eqPhaseA}) remains valid under the substitution $f_i\rightarrow If_i$:
\begin{align}
    \nonumber
    -\operatorname{Im}\mathrm{Tr}(\widetilde{A})(X)&=\sum_{i=1}^n\omega(\nabla^g_{X}(If_i),If_i)+\sum_{i=1}^n\omega(E(X,If_i),If_i)\\
    \nonumber
    &+\frac{1}{2}\sum_{i=1}^n\omega((\nabla^g_{IX}I)(If_i),If_i)-\frac{1}{2}\sum_{i=1}^n\omega(E(IX,f_i),If_i)\\
    \nonumber
    &-\frac{1}{2}\sum_{i=1}^n\omega(IE(IX,If_i),If_i)\\
    \nonumber
    &=\sum_{i=1}^n\omega(\nabla^g_{X}(If_i),If_i)+\sum_{i=1}^n\omega(E(X,If_i),If_i)\\
    \nonumber
    &-\frac{1}{2}\sum_{i=1}^n\omega((\nabla^g_{IX}I)f_i,f_i)+\frac{1}{2}\sum_{i=1}^n\omega(IE(IX,f_i),f_i)\\
    \label{eqPhaseB}
    &-\frac{1}{2}\sum_{i=1}^n\omega(E(IX,If_i),f_i)
\end{align}
Taking the sum of (\ref{eqPhaseA}) and (\ref{eqPhaseB}) gives
\begin{align}
    \nonumber
    -2\operatorname{Im}\mathrm{Tr}(\widetilde{A})(X)=\sum_{i=1}^n\omega(\nabla^g_{X}f_i,f_i)+\sum_{i=1}^n\omega(\nabla^g_{X}(If_i),If_i)\\
    \label{eqPhaseC}
    +\sum_{i=1}^n\omega(E(X,f_i),f_i)+\sum_{i=1}^n\omega(E(X,If_i),If_i).
\end{align}
For $i\le n$, we now substitute $f_i=\psi e_i$ and $If_i=\psi (Je_i)=\psi e_{n+i}$ into (\ref{eqPhaseC}) and rewrite all the $\omega$'s in terms of $g$.  This gives
\begin{align}
\nonumber
-2\operatorname{Im}\mathrm{Tr}(\widetilde{A})(X)&=\sum_{i=1}^{2n}g(\psi J\nabla^g_X(\psi e_i),e_i)+\sum_{i=1}^{2n}g(\psi J E(X,\psi e_i),e_i)\\
\label{eqPhaseD}
&=\sum_{i=1}^{2n}g(J\psi^{-1}\nabla^g_X(\psi e_i),e_i)+\sum_{i=1}^{2n}g(J\psi^{-1} E(X,\psi e_i),e_i),
\end{align}
where we have made use of the fact that $\psi$ is $g$-symmetric and $\psi J=J\psi^{-1}$.

For convenience, we will often use the following notation:
\begin{equation}
    \gamma_X:=\nabla^g_{X}\psi^{-1}
\end{equation}
Note that since $\psi$ (and hence $\psi^{-1}$) is $g$-symmetric, one can easily show that $\gamma_X$ is $g$-symmetric as well.  To obtain the desired formula for $\operatorname{Im}\mathrm{Tr}(\widetilde{A})$, we will need to compute $E$ explicitly.
\begin{proposition}
\label{propE}
For $X,Y,Z\in \mathfrak{X}(M)$, let $Q(X,Y)\in \mathfrak{X}(M)$ be the unique vector field given by
\begin{equation}
    \label{eqQ}
g(Q(X,Y),Z)=g(\psi^{-1}\gamma_ZX,Y)+g(\psi^{-1}\gamma_ZY,X).
\end{equation}
Then
\begin{align}
    \nonumber
    E(X,Y)&=\frac{1}{2}\psi\left[\gamma_XY+\gamma_YX+\psi \gamma_X\psi^{-1}Y+\psi \gamma_Y\psi^{-1}X-\psi Q(X,Y)\right].
\end{align}
\end{proposition}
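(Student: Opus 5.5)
The plan is to treat $h=g^\psi$ as a metric conformal-like deformation of $g$ by the $g$-symmetric endomorphism $P:=\psi^{-2}$, and to recover $E$ from the standard Koszul-type identity for the difference of two Levi-Civita connections. Since $\psi$ is $g$-symmetric, we have
$$
h(X,Y)=g(\psi^{-1}X,\psi^{-1}Y)=g(\psi^{-2}X,Y).
$$
Because both $\nabla^h$ and $\nabla^g$ are torsion free, $E$ is symmetric in $X,Y$. Subtracting the Koszul formulas for $h$ (once in terms of $\nabla^h$ and once in terms of $\nabla^g$) gives the well-known identity
$$
2h(E(X,Y),Z)=(\nabla^g_Xh)(Y,Z)+(\nabla^g_Yh)(X,Z)-(\nabla^g_Zh)(X,Y).
$$
I will state this identity as the starting point. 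If it needs justification, it follows from $\nabla^h h=0$ together with the symmetry of $E$ by the usual cyclic permutation argument.

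Next I compute $\nabla^g h$. Since $\nabla^g g=0$, we get $(\nabla^g_Xh)(Y,Z)=g((\nabla^g_X\psi^{-2})Y,Z)$. The Leibniz rule gives $\nabla^g_X\psi^{-2}=\gamma_X\psi^{-1}+\psi^{-1}\gamma_X$, using $\gamma_X=\nabla^g_X\psi^{-1}$. Substituting, the first two terms of the identity contribute
$$
g\bigl(\gamma_X\psi^{-1}Y+\psi^{-1}\gamma_XY+\gamma_Y\psi^{-1}X+\psi^{-1}\gamma_YX,\;Z\bigr).
$$

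The only step requiring care is rewriting the third term $g((\gamma_Z\psi^{-1}+\psi^{-1}\gamma_Z)X,Y)$ so that $Z$ sits alone in one slot. Using the $g$-symmetry of both $\gamma_Z$ and $\psi^{-1}$, we get $g(\gamma_Z\psi^{-1}X,Y)=g(X,\psi^{-1}\gamma_ZY)$. Hence the third term equals
$$
g(\psi^{-1}\gamma_ZX,Y)+g(\psi^{-1}\gamma_ZY,X),
$$
which by (\ref{eqQ}) is exactly $g(Q(X,Y),Z)$. This is why $Q$ is defined as it is. Since $Q$ is only defined implicitly via the Riesz representation with respect to $g$, this is the main point to check, and it is straightforward.

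Combining, we obtain $2g(\psi^{-2}E(X,Y),Z)=g(\,\cdots - Q(X,Y),Z)$ for all $Z$. By nondegeneracy of $g$ this gives
$$
E(X,Y)=\tfrac12\psi^2\bigl[\gamma_X\psi^{-1}Y+\psi^{-1}\gamma_XY+\gamma_Y\psi^{-1}X+\psi^{-1}\gamma_YX-Q(X,Y)\bigr].
$$
Factoring one $\psi$ out front, this becomes $\tfrac12\psi\bigl[\psi\gamma_X\psi^{-1}Y+\gamma_XY+\psi\gamma_Y\psi^{-1}X+\gamma_YX-\psi Q(X,Y)\bigr]$, which is the claimed formula.
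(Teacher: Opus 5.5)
Your proof is correct and follows essentially the same route as the paper. Once the expression for $2h(E(X,Y),Z)$ is in hand, both arguments rewrite it in terms of $g$ via $h=g(\psi^{-2}\cdot,\cdot)$ and the $g$-symmetry of $\psi^{-1}$ and $\gamma_Z$, recognize the $Q$-term, and conclude by nondegeneracy and applying $\tfrac12\psi^2$. The only difference is the source of that starting expression: the paper imports the formula for $2h(\nabla^h_XY,Z)$ from Proposition 2.1 of an earlier paper of Pham, whereas you derive it yourself from the standard difference-tensor identity $2h(E(X,Y),Z)=(\nabla^g_Xh)(Y,Z)+(\nabla^g_Yh)(X,Z)-(\nabla^g_Zh)(X,Y)$ and $\nabla^g_X\psi^{-2}=\gamma_X\psi^{-1}+\psi^{-1}\gamma_X$, which makes your argument self-contained.
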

\begin{proof}
    By Proposition 2.1 of \cite{Pham2026}, we have
    \begin{align*}
        2h(\nabla^h_XY,Z)&=2h(\nabla^g_XY,Z)+h(\psi\gamma_XY,Z)+h(\psi\gamma_YX,Z)\\
        &+h(\psi \gamma_XZ,Y)-h(\psi\gamma_ZX,Y)+h(\psi \gamma_YZ,X)\\
        &-h(\psi\gamma_ZY,X).
    \end{align*}
    Applying the definition of $h$ and using the $g$-symmetry of $\psi^{-1}$ and $\gamma_X$, the above expression can be rewritten as
    \begin{align*}
        2g(\psi^{-2}\nabla^h_XY,Z)&=2g(\psi^{-2}\nabla^g_XY,Z)+g(\psi^{-1}\gamma_XY,Z)+g(\psi^{-1}\gamma_YX,Z)\\
        &+g( Z,\gamma_X\psi^{-1}Y)-g(\psi^{-1}\gamma_ZX,Y)+g( Z,\gamma_Y\psi^{-1}X)\\
        &-g(\psi^{-1}\gamma_ZY,X).
    \end{align*}
    Applying the definition of $Q$, we have
    \begin{align*}
        2g(\psi^{-2}\nabla^h_XY,Z)&=2g(\psi^{-2}\nabla^g_XY,Z)+g(\psi^{-1}\gamma_XY,Z)+g(\psi^{-1}\gamma_YX,Z)\\
        &+g(\gamma_X\psi^{-1}Y,Z)+g(\gamma_Y\psi^{-1}X,Z)\\
        &-g(Q(X,Y),Z).
    \end{align*}
    The nondegeneracy of $g$ implies that 
    \begin{align*}
        2\psi^{-2}\nabla^h_XY&=2\psi^{-2}\nabla^g_XY+\psi^{-1}\gamma_XY+\psi^{-1}\gamma_YX\\
        &+\gamma_X\psi^{-1}Y+\gamma_Y\psi^{-1}X-Q(X,Y).
    \end{align*}
    Lastly, applying $\frac{1}{2}\psi^2$ to both sides gives
    $$
    \nabla^h_XY=\nabla^g_XY+E(X,Y).
    $$
\end{proof}
\noindent We now decompose the second term in (\ref{eqPhaseD}) as follows:
\begin{align}
    \nonumber
    \sum_{i=1}^{2n}g(J\psi^{-1}E(X,\psi e_i),e_i)&=\sum_{i=1}^ng(J\psi^{-1}E(X,\psi e_i),e_i)\\
    \label{eqPhaseE}
    &+\sum_{i=1}^ng(J\psi^{-1}E(X,\psi Je_i),Je_i)
\end{align}
where we recall that $e_{n+i}:=Je_i$ for $i\le n$. Using Proposition \ref{propE}, the first term in (\ref{eqPhaseE}) expands as 
\begin{align}
    \nonumber
    \sum_{i=1}^n&g(J\psi^{-1}E(X,\psi e_i),e_i)=-\sum_{i=1}^ng(\psi^{-1}E(X,\psi e_i),Je_i)\\
    \nonumber
    &=-\frac{1}{2}\sum_{i=1}^ng(\gamma_X(\psi e_i),Je_i)-\frac{1}{2}\sum_{i=1}^ng(\gamma_{\psi e_i}X,Je_i)\\
    \nonumber
    &-\frac{1}{2}\sum_{i=1}^ng(\psi \gamma_Xe_i,Je_i)-\frac{1}{2}\sum_{i=1}^ng(\psi \gamma_{\psi e_i}(\psi^{-1}X),Je_i)\\
    \nonumber
    &+\frac{1}{2}\sum_{i=1}^ng( Q(X,\psi e_i),\psi Je_i)\\
    \nonumber
    &=-\frac{1}{2}\sum_{i=1}^ng(\psi\gamma_X(Je_i),e_i)
    -\frac{1}{2}\sum_{i=1}^ng(\gamma_{\psi e_i}X,Je_i)\\
    \nonumber
    &-\frac{1}{2}\sum_{i=1}^ng(\gamma_X(\psi Je_i),e_i)
    -\frac{1}{2}\sum_{i=1}^ng(\psi^{-1}\gamma_{\psi e_i} (\psi Je_i),X)\\
    \label{eqPhaseF}
    &+\frac{1}{2}\sum_{i=1}^ng(\psi^{-1}\gamma_{\psi Je_i}X,\psi e_i)+\frac{1}{2}\sum_{i=1}^ng(\psi^{-1}\gamma_{\psi Je_i}(\psi e_i),X).
\end{align}
The last two terms coming from the $Q$-term in (\ref{eqPhaseF}) can be rewritten using $g$-symmetry.  The result is then 
\begin{align}
    \nonumber
    \sum_{i=1}^n&g(J\psi^{-1}E(X,\psi e_i),e_i)=-\frac{1}{2}\sum_{i=1}^ng(\psi\gamma_X(Je_i),e_i)
    -\frac{1}{2}\sum_{i=1}^ng(\gamma_{\psi e_i}X,Je_i)\\
    \nonumber
    &-\frac{1}{2}\sum_{i=1}^ng(\gamma_X(\psi Je_i),e_i)
    -\frac{1}{2}\sum_{i=1}^ng(\psi^{-1}\gamma_{\psi e_i} (\psi Je_i),X)\\
    \label{eqPhaseG}
    &+\frac{1}{2}\sum_{i=1}^ng(\gamma_{\psi Je_i}X, e_i)+\frac{1}{2}\sum_{i=1}^ng(\psi\gamma_{\psi Je_i}(\psi^{-1}X),e_i).
\end{align}
\noindent In a completely similar way, we expand the second term in (\ref{eqPhaseE}) using Proposition \ref{propE}.
\begin{align}
    \nonumber
    \sum_{i=1}^n&g(J\psi^{-1}E(X,\psi Je_i),Je_i)=\sum_{i=1}^ng(\psi^{-1}E(X,\psi Je_i),e_i)\\
    \nonumber
    &=\frac{1}{2}\sum_{i=1}^ng(\gamma_X(\psi Je_i),e_i)+\frac{1}{2}\sum_{i=1}^ng(\gamma_{\psi Je_i}X,e_i)\\
    \nonumber
    &+\frac{1}{2}\sum_{i=1}^ng(\psi\gamma_{X}(Je_i),e_i)+\frac{1}{2}\sum_{i=1}^ng(\psi \gamma_{\psi Je_i}(\psi^{-1}X),e_i)\\
    \label{eqPhaseH}
    &-\frac{1}{2}\sum_{i=1}^ng(\gamma_{\psi e_i}X,J e_i)-\frac{1}{2}\sum_{i=1}^ng(\psi^{-1}\gamma_{\psi e_i}(\psi J e_i),X).
\end{align}
Substituting (\ref{eqPhaseG}) and (\ref{eqPhaseH}) into (\ref{eqPhaseE}) gives
\begin{align}
    \nonumber
    \sum_{i=1}^{2n}&g(J\psi^{-1}E(X,\psi e_i),e_i)=-\sum_{i=1}^ng(\gamma_{\psi e_i}X,Je_i)-\sum_{i=1}^ng(\psi^{-1}\gamma_{\psi e_i}(\psi Je_i),X)\\
    \nonumber
    &+\sum_{i=1}^ng(\gamma_{\psi Je_i}X,e_i)+\sum_{i=1}^ng(\psi \gamma_{\psi Je_i}(\psi^{-1}X),e_i)\\
    \nonumber
    &=\sum_{i=1}^ng(J\gamma_{\psi e_i}X,e_i)+\sum_{i=1}^ng(J\psi\gamma_{\psi e_i}(\psi^{-1}X),e_i)\\
    \label{eqPhaseI}
    &+\sum_{i=1}^ng(J\gamma_{\psi Je_i}X,Je_i)+\sum_{i=1}^ng(J\psi \gamma_{\psi Je_i}(\psi^{-1}X),Je_i)
\end{align}
Using $e_{n+i}:=Je_i$ for $i\le n$, (\ref{eqPhaseI}) can be rewritten as
\begin{align}
\label{eqPhaseJ}
\sum_{i=1}^{2n}g(J\psi^{-1}E(X,\psi e_i),e_i)=\sum_{i=1}^{2n}g(J\gamma_{\psi e_i}X,e_i)+\sum_{i=1}^{2n}g(J\psi \gamma_{\psi e_i}(\psi^{-1}X),e_i).
\end{align}
Substituting (\ref{eqPhaseJ}) into (\ref{eqPhaseD}) gives
\begin{align}
    \nonumber
    -2\operatorname{Im}&\mathrm{Tr}(\widetilde{A})(X)=\sum_{i=1}^{2n}g(J\psi^{-1}\nabla^g_X(\psi e_i),e_i)\\
    \label{eqPhaseK}
    &+\sum_{i=1}^{2n}g(J(\nabla^g_{\psi e_i}\psi^{-1})X,e_i)+\sum_{i=1}^{2n}g(J\psi (\nabla^g_{\psi e_i}\psi^{-1})(\psi^{-1}X),e_i),
\end{align}
where we have recalled that $\gamma_X:=\nabla^g_X\psi^{-1}$. Define 
$$
\Xi_i:=J\psi^{-1}\nabla^g_X(\psi e_i)+J(\nabla^g_{\psi e_i}\psi^{-1})X+J\psi (\nabla^g_{\psi e_i}\psi^{-1})(\psi^{-1}X).
$$
Expanding $\Xi_i$ gives
\begin{align}
    \nonumber
    \Xi_i&=J\psi^{-1}\nabla^g_X(\psi e_i)+J\nabla^g_{\psi e_i}(\psi^{-1}X)\\
    \nonumber
    &-J\psi^{-1}\nabla^g_{\psi e_i}X+J\psi\nabla^g_{\psi e_i}(\psi^{-2}X)\\
    \nonumber
    &-J\nabla^g_{\psi e_i}(\psi^{-1}X)\\
    \label{eqPhaseL}
    &=J\psi^{-1}[X,\psi e_i]+J\psi\nabla^g_{\psi e_i}(\psi^{-2}X).
\end{align}
Applying (\ref{eqPhaseL}) to (\ref{eqPhaseK}) gives
\begin{align*}
    \operatorname{Im}\mathrm{Tr}(\widetilde{A})(X)&=-\frac{1}{2}\sum_{i=1}^{2n}g(J\psi^{-1}[X,\psi e_i],e_i)-\frac{1}{2}\sum_{i=1}^{2n}g(J\psi \nabla^g_{\psi e_i}(\psi^{-2}X),e_i)\\
    &=\eta_P(X).
\end{align*}

\noindent For the last statement of Theorem \ref{thmMain}, let us now assume that conditions (i) and (ii) of Theorem \ref{thmMain} are satisfied.  Let
$$
S_1(X):=-\frac{1}{2}\sum_{i=1}^{2n}g(J\psi^{-1}[X,\psi e_i],e_i)
$$
and 
$$
S_2(X):=-\frac{1}{2}\sum_{i=1}^{2n}g(J\psi \nabla^g_{\psi e_i}(\psi^{-2}X),e_i).
$$
Then $\eta_P=S_1+S_2$. Using condition (i), we observe that
\begin{align*}
    (\psi e_i)g(\psi^{-2}e_j,\psi Je_i)&=(\psi e_i)g(e_j,\psi^{-1}Je_i)\\
    &=(\psi e_i)g(e_j,J\psi e_i)\\
    &=0.
\end{align*}
The above identity together with the metric compatibility of $\nabla^g$ now implies 
\begin{align*}
    S_2(e_j)&=-\frac{1}{2}\sum_{i=1}^{2n}g(J\psi \nabla^g_{\psi e_i}(\psi^{-2}e_j),e_i)\\
    &=\frac{1}{2}\sum_{i=1}^{2n}g( \nabla^g_{\psi e_i}(\psi^{-2}e_j),\psi Je_i)\\
    &=-\frac{1}{2}\sum_{i=1}^{2n}g(\psi^{-2}e_j,\nabla^g_{\psi e_i}(\psi Je_i))\\
    &=-\frac{1}{2}\sum_{i=1}^{n}g(\psi^{-2}e_j,\nabla^g_{\psi e_i}(\psi Je_i))-\frac{1}{2}\sum_{i=n+1}^{2n}g(\psi^{-2}e_j,\nabla^g_{\psi e_i}(\psi Je_i))\\
    &=-\frac{1}{2}\sum_{i=1}^{n}g(\psi^{-2}e_j,\nabla^g_{\psi e_i}(\psi Je_i))+\frac{1}{2}\sum_{i=1}^{n}g(\psi^{-2}e_j,\nabla^g_{\psi Je_i}(\psi e_i))\\
\end{align*}
where we have used the fact that $e_{n+i}:=Je_i$ in the fifth equality.  Using the fact that $\nabla^g$ is torsion free gives
\begin{align*}
    S_2(e_j)&=-\frac{1}{2}\sum_{i=1}^{n}g(\psi^{-2}e_j,\nabla^g_{\psi e_i}(\psi Je_i))+\frac{1}{2}\sum_{i=1}^{n}g(\psi^{-2}e_j,\nabla^g_{\psi Je_i}(\psi e_i))\\
    &=-\frac{1}{2}\sum_{i=1}^{n}g(\psi^{-2}e_j,\nabla^g_{\psi e_i}(\psi Je_i))+\frac{1}{2}\sum_{i=1}^{n}g(\psi^{-2}e_j,\nabla^g_{\psi e_i}(\psi Je_i))\\
    &+\frac{1}{2}\sum_{i=1}^ng(\psi^{-2}e_j,[\psi J e_i,\psi e_i])\\
    &=\frac{1}{2}\sum_{i=1}^ng(\psi^{-2}e_j,[\psi J e_i,\psi e_i])\\
    &=0,
\end{align*}
where the last equality follows from condition (ii) of Theorem \ref{thmMain}.   Hence, $\eta_P(e_j)=S_1(e_j)$ under the assumptions of conditions (i) and (ii) of Theorem \ref{thmMain}.
\section{Examples}
\label{SecExamples}
In this section, we apply the new formula to a simple 4-dimensional (strictly) almost K\"{a}hler manifold. Let $G$ be the Lie group $H\times \mathbb{R}$ where $H$ is the $3$-dimensional Heisenberg Lie group.  Explicitly, $H$ is given by
$$
H=\left\{\begin{pmatrix}
    1 & x_1 & x_2\\
    0 & 1 & x_3\\
    0 & 0 & 1
\end{pmatrix}~|~x_1,x_2,x_3\in \mathbb{R}\right\}.
$$
$G$ has global coordinates $x_1,\dots, x_4$ where $x_4$ is the natural coordinate on $\mathbb{R}$.  Let $\mathfrak{h}:=\mathrm{Lie}(H)$ be the Lie algebra of left-invariant vector fields on $H$ which we identity with the tangent space of $H$ at the identity.  A basis for $\mathfrak{h}$ is 
$$
    \tilde{e}_1:=\begin{pmatrix}
        0 & 1 & 0\\
        0 & 0 & 0\\
        0 & 0 & 0
    \end{pmatrix},~\tilde{e}_2:=\begin{pmatrix}
        0 & 0 & 0\\
        0 & 0 & 1\\
        0 & 0 & 0
    \end{pmatrix},~\tilde{e}_3:=\begin{pmatrix}
        0 & 0 & 1\\
        0 & 0 & 0\\
        0 & 0 & 0
    \end{pmatrix}.
$$
The only nonzero bracket on $\mathfrak{h}$ is $[\tilde{e}_1,\tilde{e}_2]=\tilde{e}_3$. The Lie algebra of $G$ is $\mathfrak{g}=\mathfrak{h}\oplus \mathbb{R}$.  A basis for $\mathfrak{g}$ is
$$
e_j:=(\tilde{e}_j,0),~j=1,2,3,\hspace*{0.1in}e_4:=(0,1).
$$
Let $\partial_j:=\frac{\partial}{\partial x_j}$ for $j=1,\dots, 4$.  Expressing the left-invariant vector fields $\{e_j\}$ in terms of the coordinate tangent vector fields one has
$$
e_1=\partial_1,~e_2=x_1\partial_2+\partial_3,~e_3=\partial_2,~e_4=\partial_4.
$$
Let $\theta^1,\dots, \theta^4$ denote the left-invariant 1-forms on $\mathfrak{g}$ dual to $e_1,\dots, e_4$.  Then
$$
dx_1=\theta^1,~dx_2=x_1\theta^2+\theta^3,~dx_3=\theta^2,~dx_4=\theta^4.
$$

We consider a left-invaraint almost K\"{a}hler structure $(g,J,\omega)$ on $G$ which is defined as follows:
$$
g(e_i,e_j)=\delta_{ij},\hspace*{0.1in}\omega =\theta^1\wedge \theta^3+\theta^2\wedge \theta^4,
$$
$$
Je_1=e_3,~Je_2=e_4,~Je_3=-e_1,~Je_4=-e_2.
$$
We write the components of the Levi-Civita connection of $g$ as $\nabla^g_{e_i}e_j=\sum_{k=1}^4\Gamma_{ij}^ke_k$.  The nonzero components are then
$$
\Gamma^1_{23}=\frac{1}{2},~\Gamma^1_{32}=\frac{1}{2},~\Gamma^2_{13}=-\frac{1}{2},~\Gamma^2_{31}=-\frac{1}{2},~\Gamma^3_{12}=\frac{1}{2},~\Gamma^3_{21}=-\frac{1}{2}.
$$
Let $S_1$ and $S_2$ be defined as in Section \ref{SecMainTheorem}.  Then $\eta_P=S_1+S_2$. Note that while $\eta_P$ is a 1-form, $S_1$ and $S_2$, considered individually, are not $1$-forms.  For the examples considered below, we choose functions $f$ on $G$ for which 
$$
\widetilde{\psi}:=\mathcal{L}_{X_f}J
$$
is sufficiently simple, that is, the associated twist map $\psi:=\mathrm{exp}(\widetilde{\psi})$ can be computed explicitly.  Moreover, the functions $f$ have been chosen so that conditions (i) and (ii) of Theorem \ref{thmMain} are satisfied, which  will further simplify the calculation of $\operatorname{Im}\mathrm{Tr}(\widetilde{A})$.

\begin{example}
    \label{example1}
    Let $f=\lambda x_2$ where $\lambda \in \mathbb{R}$.  By direct calculation, one finds that 
    $$
    X_f=-\lambda e_1+\lambda x_1 e_4.
    $$
    $\widetilde{\psi}:=\mathcal{L}_{X_f}J$ is given by
    $$
     \widetilde{\psi}e_1=-\lambda e_2,~\widetilde{\psi}e_2=-\lambda e_1,~\widetilde{\psi}e_3=\lambda e_4,~\widetilde{\psi}e_4=\lambda e_3.
    $$
    $\psi:=\mbox{exp}(\widetilde{\psi})$ is then given by
    $$
    \psi e_1=ce_1-s e_2,\hspace*{0.1in}\psi e_2=-s e_1+c e_2,
    $$
    $$
    \psi e_3 = c e_3+s e_4,\hspace*{0.1in} \psi e_4 = s e_3+c e_4,
    $$
    where $c:=\cosh(\lambda)$ and $s:=\sinh(\lambda)$.  The inverse of $\psi$ is easily computed:
    $$
    \psi^{-1}e_1=ce_1+se_2,\hspace*{0.1in} \psi^{-1}e_2=se_1+ce_2
    $$
    $$
    \psi^{-1}e_3=ce_3-se_4,\hspace*{0.1in}\psi^{-1}e_4=-se_3+ce_4.
    $$
    Since $g$ and $\psi$ are left-invariant, it follows that 
    $$
     (\psi e_i)g(e_j,\psi e_i)=0
    $$
    for all $i,j$.  Moreover, let $U:=\mbox{span}\{e_1,e_2\}$ and $V:=\mbox{span}\{e_3,e_4\}$.  For this example, we have $\psi U\subset U$ and $\psi V\subset V$.  Given that $[\mathfrak{g},V]=0$ and $JU\subset V$, we have
    $$
     [\psi e_i, \psi Je_i] = 0
    $$
    for $i=1,2$.  By the second statement of Theorem \ref{thmMain}, $S_2(e_j)=0$ for all $j$.  Hence,  
    $$
    \eta_P(e_j) = S_1(e_j):=-\frac{1}{2}\sum_{i=1}^{4}g(J\psi^{-1}[e_j,\psi e_i],e_i).
    $$
    In addition, since $[\mathfrak{g},V]=0$, we immediately have 
    $$
        \eta_P(e_3)=\eta_P(e_4)=0.
    $$
    Moreover, since $\psi e_3,~\psi e_4\in V:=\mbox{span}\{e_3,e_4\}$, $\eta_P(e_j)$ further reduces to 
    $$
    \eta_P(e_j)=-\frac{1}{2}\sum_{i=1}^{2}g(J\psi^{-1}[e_j,\psi e_i],e_i).
    $$
    To determine $\eta_P(e_1)$, we compute 
    \begin{align*}
        J\psi^{-1}[e_1,\psi e_1]&=cse_1-s^2e_2\\
        J\psi^{-1}[e_1,\psi e_2]&=-c^2e_1+cse_2.
    \end{align*}
    From this, it follows that 
    $$
    \eta_P(e_1)=-\frac{1}{2}cs-\frac{1}{2}cs=-cs.
    $$
    To determine $\eta_P(e_2)$, we compute 
    \begin{align*}
        J\psi^{-1}[e_2,\psi e_1]&=c^2e_1-cse_2\\
        J\psi^{-1}[e_2,\psi e_2]&=-cse_1+s^2e_2.
    \end{align*}
    This implies 
    $$
    \eta_P(e_2)=-\frac{1}{2}c^2-\frac{1}{2}s^2=-\frac{1}{2}(c^2+s^2).
    $$
    Putting everything together, we conclude that 
    $$
    \eta_P = -cs\theta^1-\frac{1}{2}(c^2+s^2)\theta^2,
    $$
    where, again, $c:=\cosh(\lambda)$ and $s:=\sinh(\lambda)$.

    For comparison, we also calculate the connection 1-form $\widetilde{A}$ with respect to the twisted Chern connection $\widetilde{D}$ on $T_IG$ with respect to the frame $f_1:=\psi e_1$, $f_2:=\psi e_2$.  By a direct (and lengthy) calculation, the four components of $\widetilde{A}$ are given by
    \begin{align*}
        \widetilde{A}_{11}&=\sqrt{-1}\left(-\frac{cs}{2}\theta^1-\frac{c^2}{2}\theta^2 \right)\\
        \widetilde{A}_{21}&=-\frac{c^2+s^2}{4}\theta^3+\frac{cs}{2}\theta^4+\sqrt{-1}\left[\frac{c^2+s^2}{4}\theta^1+\frac{cs}{2}\theta^2 \right]\\
        \widetilde{A}_{12}&=\frac{c^2+s^2}{4}\theta^3-\frac{cs}{2}\theta^4+\sqrt{-1}\left[\frac{c^2+s^2}{4}\theta^1+\frac{cs}{2}\theta^2 \right]\\
        \widetilde{A}_{22}&=\sqrt{-1}\left[-\frac{cs}{2}\theta^1-\frac{s^2}{2}\theta^2\right].
    \end{align*}
    Hence, 
    $$
    \operatorname{Im}\mathrm{Tr}(\widetilde{A})=-cs\theta^1-\frac{c^2+s^2}{2}\theta^2=\eta_P.
    $$    
\end{example}

\newpage
\begin{example}
Let $f=x_4^2$.  From $\omega(X_f,\cdot)=-df$, we see that 
$$
X_f=-2x_4e_2.
$$
Computing $\widetilde{\psi}=\mathcal{L}_{X_f}J$, we obtain
$$
\widetilde{\psi} e_1  = 2x_4 e_1, \quad\quad
\widetilde{\psi} e_2 =  2 e_2, \quad\quad
\widetilde{\psi} e_3 =  -2x_4 e_3, \quad\quad
\widetilde{\psi} e_4 = - 2 e_4.
$$
Since $\widetilde{\psi}$ is diagonal, $\psi:=\exp(\widetilde{\psi})$ is especially easy to calculate in the current example:
$$
\psi e_1  = a e_1, \quad\quad
\psi e_2 =  b e_2, \quad\quad
\psi e_3 =  a^{-1} e_3, \quad\quad
\psi e_4 = b^{-1} e_4,
$$
where $a:=e^{2x_4}$ and $b:=e^2$.  By inspection, one verifies that conditions (i) and (ii) of Theorem \ref{thmMain} are satisfied here.  Hence, $\eta_P(e_j)=S_1(e_j)$ for $j=1,\dots, 4$.  By direct calculation, we have the following:
\begin{align*}
S_1(e_1)&=-\frac{1}{2}\sum_{i=1}^4g(J\psi^{-1}[e_1,\psi e_i],e_i)=0,\\
S_1(e_2)&=-\frac{1}{2}\sum_{i=1}^4g(J\psi^{-1}[e_2,\psi e_i],e_i)=-\frac{1}{2}e^{4x_4},\\
S_1(e_3)&=-\frac{1}{2}\sum_{i=1}^4g(J\psi^{-1}[e_3,\psi e_i],e_i)=0,\\
S_1(e_4)&=-\frac{1}{2}\sum_{i=1}^4g(J\psi^{-1}[e_4,\psi e_i],e_i)=0.
\end{align*}
From this, we see that
$$
\eta_P = -\frac{1}{2}e^{4x_4}\theta^2.
$$
For comparison, we also compute the local connection 1-form $\widetilde{A}$ of $\widetilde{D}$ with respect to $f_i:=\psi e_i$, $i=1,2$:
\[
\begin{aligned}
\widetilde{A} 
= \begin{pmatrix}
    -\frac{\sqrt{-1}}{2} e^{4x_4} \theta^2 & \sqrt{-1}\xi \theta^1 + \zeta\theta^3\\
    \sqrt{-1} \xi\theta^1 - \zeta \theta^3 & 0
\end{pmatrix},
\end{aligned}
\]
where $\xi = \frac{1}{4}e^{2x_4 + 2} + e^{-2x_4 - 2} $ and $\zeta = \frac{1}{4}e^{6x_4 + 2} + e^{2x_4 -2}$.  From this, we have 
$$
\operatorname{Im}\mathrm{Tr}(\widetilde{A})=-\frac{1}{2}e^{4x_4}\theta^2
$$
which is in agreement with $\eta_P$.
\end{example}

\section{Conclusion}
\label{secConclusion}
In this note, we have established a formula for computing the Chern-Ricci form of a twisted almost K\"{a}hler manifold $(M,g^\psi,J^\psi,\omega)$ for $\psi$ a twist map.  The formula given by $\eta_P$ in Theorem \ref{thmMain} allows one to compute the Chern-Ricci form of the twisted almost K\"{a}hler structure $(g^\psi,J^\psi,\omega)$ without actually computing the Chern connection itself and thus offers a distinct computational advantage.  Future work will focus on studying the formula in depth and extracting geometric consequences from it.

\end{document}